\newcommand{\D}{\mathcal D}
\newcommand{\Z}{\mathbb Z}
\renewcommand{\P}{\mathbb P}
\newcommand{\E}{\mathbb E}
\newcommand{\F}{\mathcal F}
\newcommand{\ex}{\textup{ex}}
\newcommand{\floor}[1]{\left\lfloor #1 \right\rfloor}
\newtheorem{thm}{Theorem}[section]
\newtheorem{cla}[thm]{Claim}
\theoremstyle{plain}
\newtheorem{definition}[thm]{Definition}
\newtheorem{lemma}[thm]{Lemma}
\newtheorem{theorem}[thm]{Theorem}
\newtheorem{conjecture}[thm]{Conjecture}
\newtheorem{rem}[thm]{Remark}
\title{Lower bounds for the Tur\'an densities of daisies}
\author[David Ellis and Dylan King]{David Ellis\** and Dylan King\textsuperscript{\textdagger}}
\date{June 2022.\\ \small \**School of Mathematics, University of Bristol, UK. \texttt{david.ellis@bristol.ac.uk}. \\ \textsuperscript{\textdagger}School of Mathematics, University of Bristol, UK. \texttt{gs21934@bristol.ac.uk}. Supported by a Marshall Scholarship.}                             
\begin{document}
\maketitle
\begin{abstract}
For integers $r \geq 2$ and $t \geq 2$, an $r$-uniform {\em $t$-daisy} $\D^t_r$ is a family of $\binom{2t}{t}$ $r$-element sets of the form
$$\{S \cup T \ :  T\subset U, \ |T|=t  \}$$
for some sets $S,U$ with $|S|=r-t$, $|U|=2t$ and $S \cap U = \emptyset$. It was conjectured by Bollob\'as, Leader and Malvenuto in \cite{BOLLOBS2011} (and independently by Bukh) that the Tur\'an densities of $t$-daisies satisfy $\lim\limits_{r \to \infty} \pi(\D_r^t) = 0$ for all $t \geq 2$ (an equivalent conjecture was made independently by Johnson and Talbot \cite{JohnsonTalbot}). This has become a well-known problem, and it is still open for all values of $t$. In this paper, we give lower bounds for the Tur\'an densities of $r$-uniform $t$-daisies. To do so, we introduce (and make some progress on) the following natural problem in additive combinatorics: for integers $m \geq 2t \geq 4$, what is the maximum cardinality $g(m,t)$ of a subset $R$ of $\mathbb{Z}/m\mathbb{Z}$ such that for any $x \in \mathbb{Z}/m\mathbb{Z}$ and any $2t$-element subset $X$ of $\mathbb{Z}/m\mathbb{Z}$, there are $t$ distinct elements of $X$ whose sum is not in the translate $x+R$? This is a slice-analogue of the extremal Hilbert cube problem considered in \cite{Cilleruelo2017} and \cite{GUNDERSON1998}.
\end{abstract}

\section{Introduction}
For integers $r \geq 3$ and $t \geq 2$, an $r$-uniform {\em $t$-daisy} $\D^t_r$ is a collection of $\binom{2t}{t}$ $r$-element sets of the form
$$\{S \cup T \ :  T\subset U, \ |T|=t  \}$$
for some sets $S,U$ with $|S|=r-t$, $|U|=2t$ and $S \cap U = \emptyset$. As usual, for an integer $n \geq 3$ we write $\ex(n,\D^t_r)$ for the $n$th {\em Tur\'an number} of $\D_r^t$, i.e.\ the maximum possible cardinality of a family of $r$-element subsets of $\{1,2,\ldots,n\}$ which is $\D^t_r$-free, and we write
\begin{equation*}
\pi(\D^t_r) = \lim_{n \to \infty}\frac{\ex(n,\D^t_r)}{\binom{n}{r}}
\end{equation*} 
for the {\em Tur\'an density} of $\D^t_r$.

In the case $r=2$, we have $\pi(\D_2^2)=2/3$ by Tur\'an's theorem for $K_4$'s. The first unknown case occurs when $r=3$; in this case, Bollob\'as, Leader and Malvenuto show in \cite{BOLLOBS2011} that $\pi(\D_3^2)\geq\frac{1}{2}$, by taking the complement of the Fano plane, blowing up and iterating, and conjecture that in fact equality holds. In Proposition 4.3 of \cite{Falgas2013}, Falgas-Ravry and Vaughan use the semidefinite programming approach developed by Razborov to show that $\pi(\D_3^2)\leq 0.504081$. For larger $t \geq 2$ and $r \geq 3$, even less is known concerning $\pi(\D_r^t)$. The following conjecture was made by Bollob\'as, Leader and Malvenuto in \cite{BOLLOBS2011} (and independently by Bukh, see \cite{BOLLOBS2011}).
\begin{conjecture}[Bollob\'as-Leader-Malvenuto / Bukh]
\label{conj:daisy}
For all $t \geq 2$, $\lim\limits_{r \to \infty} \pi(\D^t_r)=0.$
\end{conjecture}
This is still open even for $t=2$. Johnson and Talbot independently made an equivalent conjecture in \cite{JohnsonTalbot}, which we now describe. To state it, we need the (standard) definition of a subcube of the Boolean cube.
\begin{definition}
    For $n,d \in \mathbb{N}$ with $1 \leq d \leq n$, a {\em $d$-dimensional subcube} of the $n$-dimensional Boolean cube $\{0,1\}^n$ is a subset of $\{0,1\}^n$ of the form
    \begin{equation*}
        \{x \in \{0,1\}^n \ : \ x_i = a_i \ \forall \ i \in I \}
    \end{equation*}
    for some set $I \in \binom{[n]}{n-d}$ and values $a_i$ such that $a_i \in \{0,1\}$ for each $i \in I$. (The elements of the set $I$ are called the {\em fixed coordinates} of the subcube; the elements of $[n] \setminus I$ are called the {\em moving coordinates}.) 
\end{definition}
\noindent (Here, and henceforth, we write $[n]: = \{1,2,\ldots,n\}$ for the standard $n$-element set.)
\begin{conjecture}[Johnson-Talbot]\label{conj:TJ}
    Let $d \geq 2$ and $\delta \in (0,1]$. Then for $n$ sufficiently large depending on $d$ and $\delta$, and any set $A \subset \{0,1\}^n$ with $|A| \geq \delta 2^n$, there exists a $d$-dimensional subcube $\mathcal{C}$ with $|A \cap \mathcal{C} | \geq \binom{d}{\floor{d/2}}$. 
\end{conjecture}
\noindent It is easy to verify Conjecture \ref{conj:TJ} for $d=2$ and $d=3$, but it remains open for all $d \geq 4$. It is easy to see that Conjectures \ref{conj:daisy} and \ref{conj:TJ} are equivalent for $d=2t$; the reader is referred to \cite{BOLLOBS2011} for details.

The value $\pi(\D_r^t)$ is clearly nondecreasing in $t$ for fixed $r$, since an $r$-uniform family that is free of $t$-daisies is also free of $t'$-daisies for all $t'>t$. It is also easy to see (by averaging over links of vertices) that the value $\pi(\D_r^t)$ is nonincreasing in $r$ for each fixed $t$.

In \cite{BOLLOBS2011}, a lower bound of $\pi(\D_r^2)\geq r!/r^r $ is observed; this comes from considering the $r$-partite $r$-uniform hypergraph on $[n]$ with parts of sizes as equal as possible. However, this lower bound is exponentially small in $r$. In this paper, we obtain the following improved lower bound, which is polynomial in $r$, and linear when $t=2$, using an additive-combinatorial construction. We also raise a question in additive combinatorics which may be of interest in its own right.
\begin{theorem}\label{thm:main_result}
We have $ \pi(D_r^2) = \Omega(1/r)$. Furthermore, for each $t \geq 3$, we have 
\begin{equation*}
    \pi(D_r^t) \geq r^{-\frac{4t-2}{\binom{2t}{t}-1}-O(1/\sqrt{\log r})}.
\end{equation*}
\end{theorem}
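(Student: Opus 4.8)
The plan is to pass from the hypergraph Turán problem to the additive-combinatorial function $g(m,t)$ defined in the abstract, and then to lower-bound $g(m,t)$ by an explicit (random or algebraic) construction. First I would set up the reduction: given a set $R \subseteq \mathbb{Z}/m\mathbb{Z}$ witnessing $g(m,t)$ — so that for every $x$ and every $2t$-set $X$, some $t$-subset of $X$ has sum outside $x+R$ — I would build a $\mathcal{D}_r^t$-free $r$-uniform family on a vertex set of size $n$. The natural construction (following the ``blow-up and iterate'' philosophy of Bollobás--Leader--Malvenuto, but now using the additive structure) is to take vertices labelled by elements of $\mathbb{Z}/m\mathbb{Z}$ (with multiplicities, i.e.\ a balanced blow-up to get $n$ vertices), and to include an $r$-set precisely when the sum of its labels lies in some fixed target set; one then iterates inside each colour class to avoid losing a constant factor. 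The point is that a $t$-daisy $\{S\cup T : T \subset U,\ |T| = t\}$ has all $\binom{2t}{t}$ of its members present only if, writing $x$ for (minus) the sum of the labels in $S$, every $t$-subset of the labelled multiset $U$ sums into the translate $x+R$ — exactly what $R$ forbids. This should give $\pi(\mathcal{D}_r^t) \gtrsim (g(m,t)/m)^{\text{something}}$, with the exponent governed by how the density behaves under the $r$-set sum condition; I expect a clean bound of the shape $\pi(\mathcal{D}_r^t) \geq (g(m,t)/m)$ raised to a power depending on $r$, optimised over $m$.

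Next I would prove the additive-combinatorial estimates for $g(m,t)$. For $t = 2$ the target is a bound forcing $\pi(\mathcal{D}_r^2) \geq c/r$, which corresponds to showing $g(m,2)/m$ is bounded below by an absolute constant for suitable $m$ (indeed the $\binom{2t}{t}-1 = 5$ and $4t-2 = 6$ would give exponent $6/5$, but the claimed bound $c/r$ is stronger, so for $t=2$ one wants $g(m,2) \geq \delta m$ with $m \approx r$). Here I would look for an explicit $R$ — for instance an interval, an arithmetic-progression-free set, or a union of cosets of a subgroup — such that no four elements $a,b,c,d$ of $\mathbb{Z}/m\mathbb{Z}$ can have all of $a+b, a+c, a+d, b+c, b+d, c+d$ lie in a single translate of $R$; a dimension/counting argument (six pairwise sums constrained to an interval of length $\delta m$ forces the four points into a short interval, a contradiction if $R$ is chosen with gaps) should do it. For general $t \geq 3$, the clean exponent $\frac{4t-2}{\binom{2t}{t}-1}$ strongly suggests a \emph{random} construction: take $R$ to be a random subset of $\mathbb{Z}/m\mathbb{Z}$ of density $p$, and for a fixed $x$ and $2t$-set $X$ estimate the probability that \emph{all} $\binom{2t}{t}$ sums of $t$-subsets land in $x+R$; if the relevant $t$-subset sums were independent this probability is $p^{\binom{2t}{t}}$, and a union bound over the $\sim m^{2t+1}$ choices of $(x,X)$ (really $m^{2t}$ after quotienting by translation, times $m$ for $x$, but overlaps cut this down — this is where care is needed) succeeds when $p^{\binom{2t}{t}} m^{2t+1} \to 0$, i.e.\ $p \approx m^{-(2t+1)/\binom{2t}{t}}$; matching this with the conversion exponent from the hypergraph reduction and choosing $m$ as a function of $r$ (roughly $m = r^{\Theta(1/\sqrt{\log r})}$ or a slowly growing power, which accounts for the $O(1/\sqrt{\log r})$ error term) yields the stated bound. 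The $4t-2$ (rather than $2t+1$ or $2t$) in the numerator will come from the precise bookkeeping of how many $(x,X)$ pairs are genuinely distinct once one removes the ones where two of the $t$-subset sums coincide, and from alignment with the daisy structure where $S$ ranges over $(r-t)$-sets.

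The main obstacle I anticipate is the \textbf{dependence among the $t$-subset sums} of a $2t$-element set: the $\binom{2t}{t}$ sums $\sum_{i \in T} a_i$ are far from independent (e.g.\ $T$ and its complement have sums adding to $\sum_i a_i$, a fixed constant), so the naive second-moment or Janson-type computation must be done with the correct correlation structure, and it is not \emph{a priori} obvious that the union bound survives — one may need to restrict to a well-chosen sub-family of the $t$-subsets (a ``sunflower-free'' or ``Sidon-like'' collection $\mathcal{T}$ of $t$-subsets of $[2t]$ whose sums are as spread out as possible) and only demand that $R$ avoid those, which is exactly what would replace $\binom{2t}{t}$ by $\binom{2t}{t}-1$ in the exponent (dropping one redundant sum). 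Getting this combinatorial input right — identifying the maximal collection of ``generically independent'' $t$-subset sums and verifying the union bound with the genuine dependencies — is the crux; everything else (the blow-up, the iteration to remove constant-factor losses, and the optimisation over $m$) is routine by comparison.
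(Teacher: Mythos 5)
The high-level reduction to $g(m,t)$ --- label vertices by residues in $\Z_L$, include an $r$-set when the sum of its labels lies in the candidate set $R$, and observe that a surviving $t$-daisy would force $x_0 + C(X) \subset R$ --- is exactly the paper's construction, and you have identified it correctly. But your guess for the resulting density is off in a way that matters: there is no exponent depending on $r$, and no iteration inside colour classes. The density one actually obtains is linear, namely $\pi(\D_r^t) \geq g(L,t)/(2L)$, with $L$ a \emph{prime} chosen with $r^2 \leq L \leq 2r^2$. The reason $L \approx r^2$ (rather than $\approx r$ or a slowly growing function) is that the daisy-freeness argument needs the $2t$ petal labels to be distinct residues, so one restricts $\F_R$ to $r$-sets whose labels are pairwise distinct; by the birthday paradox this costs only a factor $2$ exactly when $L \gtrsim r^2$, and then a single averaging over the $L$ translates of $R$ extracts density $g(L,t)/(2L)$. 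This observation kills your plan for $t=2$: you do not need (and cannot expect) $g(m,2) \geq \delta m$. The paper instead uses a Sidon set via Singer's construction, giving $g(m,2) \geq \sqrt{m}/8$, and with $L \approx r^2$ this already yields $\pi(\D_r^2) \geq \sqrt{L}/(16L) \geq 1/(32r)$. Your ``six pairwise sums in a short window'' instinct is the right one --- if all six are in a translate of a Sidon set $R$, then $(x_0+x_1+x_2)+(x_0+x_3+x_4)=(x_0+x_1+x_3)+(x_0+x_2+x_4)$ is a nontrivial Sidon equation --- but the correct scaling for the target set is $\sqrt{m}$, not $\delta m$.

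For $t \geq 3$, the random construction is right in outline, but the source of the exponent $\binom{2t}{t}-1$ is not the one you propose. It does not come from discarding a redundant $t$-subset sum; it comes from the alteration (deletion) method, run inside a Behrend 3-AP-free substrate. Concretely: pick a Behrend set $R_1 \subset \Z_m$ of density $m^{-O(1/\sqrt{\log m})}$, take $R_2 \subset R_1$ by keeping each element independently with probability $p$, and let $Y$ count translated cubes $x_0+C(X)$ landing entirely in $R_2$. The point of the AP-free substrate is that any such $x_0+C(X) \subset R_2$ automatically has the full $\binom{2t}{t}$ elements (equality of two $t$-subset sums would create a 3-AP inside $R_2$), so $\E Y \leq m^{2t} p^{\binom{2t}{t}}$ for prime $m$ (translate $x_0$ away using $t^{-1}$). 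One then deletes a point from each bad cube, and the method succeeds as long as $\E Y \ll \E|R_2| \approx m^{1-O(1/\sqrt{\log m})} p$, i.e. $p^{\binom{2t}{t}-1} \ll m^{1-2t-O(1/\sqrt{\log m})}$. That is where $\binom{2t}{t}-1$ appears, and the $O(1/\sqrt{\log r})$ in the final bound is the density cost of the Behrend step (with $m = L \approx r^2$), not an artefact of optimising $m$. Without some such device to handle the degenerate cubes, your direct union bound stalls precisely on the dependency issue you flag, so this is a genuine missing ingredient and not merely a bookkeeping difference.
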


Our proof of Theorem \ref{thm:main_result} relies upon the existence of a subset of $\mathbb{Z}_m:=\mathbb{Z}/m\mathbb{Z}$ that avoids a certain additive structure, which we define now.
\begin{definition}\label{def:g}
For positive integers $m,t \geq 2$ with $m \geq 2t$, let $g(m,t)$ denote the maximum possible size of a subset $R \subset \Z_m$ such that for any $x_0\in \Z_m$ and any $(2t)$-element subset $X$ of $\mathbb{Z}_m$, there are $t$ distinct elements of $X$ whose sum is not contained in $R-x_0$, i.e.
\begin{equation*}
\left\{x_0+\sum_{x \in T}x \ : \ T\subset X, \ |T|=t\right\} \not \subset R.
\end{equation*}
\end{definition}
\noindent For brevity, given a set $X \in \binom{\mathbb{Z}_m}{2t}$ we write 
\begin{equation*}
C(X):=\left\{\sum_{x \in T}x \ : \ T\subset X, \ |T|=t\right\}
\end{equation*}
for the set of sums of $t$ distinct elements of $X$; $g(m,t)$ is the maximum size of a subset of $\mathbb{Z}_m$ containing no translate of $C(X)$ for any $|X|=2t$.
\newline

The function $g(m,t)$ is related to a question raised by Gunderson and R\"{o}dl in \cite{GUNDERSON1998}, concerning Hilbert cubes.
\begin{definition}\label{def:hcube}
If $B$ is a ring, the {\em $d$-dimensional Hilbert cube} generated by $x_1,\dots,x_d \in B$ is the set
\begin{equation*}
\left\{\sum_{i \in I}x_i \ : \ I \subset  \{1,2,\dots,d\} \right\} \subset B.
\end{equation*}
\end{definition}
Gunderson and R\"{o}dl considered large sets of integers which do not contain any translate of a Hilbert cube (working over the integers, i.e., the $B=\mathbb{Z}$ case of Definition \ref{def:hcube}). In particular, they prove the following (Theorem 2.3 and Theorem 2.5 of \cite{GUNDERSON1998}).
\begin{theorem}\label{thm:gunder}
For each integer $d \geq 3$, there exists $c_d > 0$ such that any set of integers $A \subset [m]$ with $|A| \geq c_d (\sqrt{m}+1)^{2-\frac{1}{2^{d-2}}}$ contains a translate of a $d$-dimensional Hilbert cube. Furthermore, for all $m$ there exists a set of integers $A \subset [m]$ with $|A| \geq m^{1-\frac{d}{2^d-1}-O(1/\sqrt{\log m})}$ that does not contain any translate of a Hilbert cube.
\end{theorem}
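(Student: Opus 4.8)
This is the result of Gunderson and R\"odl; the plan is to reproduce their argument, whose two halves are independent and use different machinery.

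\textbf{Upper bound (dense sets contain a cube).} I would induct on $d$, the base case $d=2$ being the classical Sidon estimate: if $A$ lies in an interval of length $m$ and $|A|>C\sqrt m$ for a suitable absolute constant $C$, then the number of solutions of $a+e=b+c$ in $A$ strictly exceeds the number supported on at most three distinct values (a Cauchy--Schwarz count: there are $\ge |A|^4/(2m)$ solutions, but only $O(|A|^2)$ degenerate ones), so $A$ contains four distinct elements with $a+e=b+c$, i.e.\ a translate of a proper $2$-dimensional Hilbert cube (put $x_0=a$, $x_1=b-a$, $x_2=c-a$). For the inductive step, the key point is that a translate of a $d$-cube is the union of a translate $H$ of a $(d-1)$-cube with a shift $H+y$ of it; hence $A$ contains a proper $d$-cube translate as soon as some $y\neq 0$ has $A\cap(A-y)$ containing a proper $(d-1)$-cube translate $H$ with $H\cap(H+y)=\varnothing$. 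From $\sum_y|A\cap(A-y)|=|A|^2$ and $|A-A|\le 2m$, averaging yields at least $|A|/2$ values of $y$ with $|A\cap(A-y)|\ge |A|^2/(4m)$; since $A\cap(A-y)$ sits in an interval of length $<m$, the induction hypothesis applies to it as long as $|A|^2/(4m)$ exceeds the dimension-$(d-1)$ threshold, and this inequality rearranges into a bound on $|A|$ of the claimed shape once one checks that the exponents satisfy $e_d=\tfrac12(1+e_{d-1})$ with $e_2=\tfrac12$, so $e_d=1-2^{-(d-1)}$, which is exactly $\tfrac12\big(2-2^{-(d-2)}\big)$, the exponent of $\sqrt m$ in the statement. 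To secure $H\cap(H+y)=\varnothing$ one runs the induction in a (standard) supersaturated form producing many proper $(d-1)$-cube translates, or tracks a cube of diameter $\le m/2$, leaving room to translate; this bookkeeping is routine but must be done.

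\textbf{Lower bound (a large cube-free set).} I would use the deletion method. Let $A_0\subseteq[m]$ be a random subset including each element independently with probability $p$. A translate of a proper $d$-dimensional Hilbert cube is determined by a base point together with $d$ generators, so there are $O(m^{d+1})$ of them in $[m]$; each has $2^d$ elements, hence lies inside $A_0$ with probability $p^{2^d}$, and so the expected number of $d$-cube translates in $A_0$ is $O\!\big(m^{d+1}p^{2^d}\big)$, against $\E|A_0|=pm$. Taking $p=c\,m^{-d/(2^d-1)}$ with $c$ small forces the former below $\tfrac12\E|A_0|$; deleting one element from each offending cube then leaves, in expectation, a $d$-cube-free set of size $\ge \tfrac12 pm=\Omega\!\big(m^{1-d/(2^d-1)}\big)$, and such a set exists. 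This already gives a slightly stronger bound than the one stated; the extra factor $m^{-O(1/\sqrt{\log m})}$ should be read as the price of whatever stronger demand produces it, most plausibly also ruling out the degenerate Hilbert cubes, which contain short arithmetic progressions and can be excluded by first intersecting with a Behrend-type progression-free set of density $m^{1-O(1/\sqrt{\log m})}$.

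\textbf{Main obstacle.} Nothing here is deep; the care goes into the two book-keeping issues in the upper bound --- arranging the averaging threshold to be \emph{precisely} the dimension-$(d-1)$ threshold, so that the induction closes at the exponent $1-2^{-(d-1)}$ rather than the weaker $1-2^{-d}$ that a one-shot Gowers-type count would yield, and discharging the non-degeneracy/disjointness condition --- together with, on the constructive side, identifying exactly which cube-like configurations the deletion argument already kills and which (if any) need the Behrend input. I expect these points, not the main line of either argument, to be where the work lies.
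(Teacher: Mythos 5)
Theorem~\ref{thm:gunder} is quoted from Gunderson and R\"odl (their Theorems~2.3 and~2.5) and is \emph{not proved} in the present paper, so there is no in-paper proof to compare against; the relevant comparison is to Gunderson--R\"odl themselves and, more usefully, to the paper's own proofs of the closely analogous Theorems~\ref{thm:gmt_lb} and~\ref{thm:gmt_ub}, which the authors state are modelled on \cite{GUNDERSON1998}. On that comparison your outline is essentially right: the upper bound via iterated pairwise intersections $A \mapsto A \cap (A-y)$ together with an averaging step at each stage is exactly the paper's Claim~\ref{claim:final} (they run the induction bottom-up building $A_{x_1,\dots,x_d}$, you run it top-down splitting a $d$-cube into a $(d-1)$-cube and its shift, but the recursion on exponents $e_d=\tfrac12(1+e_{d-1})$ is the same computation as their Lemma~\ref{lem:binom_algebra} iterated), and the lower bound via a random subset of a Behrend $3$-AP-free set followed by deletion is exactly the argument of their Theorem~\ref{thm:gmt_lb}.

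One place where the sketch is too casual is the claim that the bare deletion argument ``already gives a slightly stronger bound'' and that the Behrend factor is merely an optional extra. The $O(m^{d+1})p^{2^d}$ count assumes every tuple $(x_0,x_1,\dots,x_d)$ produces a configuration of exactly $2^d$ distinct elements; tuples whose cube has only $k<2^d$ distinct sums contribute $p^{k}\gg p^{2^d}$ to the expectation, and there are $\Theta(m^{d+1})$ of these too once one allows coincidences among partial sums, so the deletion count does not close on its own. This is precisely what the Behrend set is for --- inside a $3$-AP-free host, any translate $x_0+C(X)$ (or any Hilbert cube with distinct nonzero generators) automatically has full size, which is the paper's observation immediately after the definition of $Y$ in the proof of Theorem~\ref{thm:gmt_lb} --- and it is where the $m^{-O(1/\sqrt{\log m})}$ loss comes from. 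You correctly guess this role for Behrend, but it should be stated as a necessary step of the argument rather than ``the price of whatever stronger demand produces it.'' The remaining bookkeeping you flag in the upper half (securing distinctness/properness of the generators) is likewise real and is exactly the ``small change'' the authors say they had to make to adapt \cite{GUNDERSON1998} to Theorem~\ref{thm:gmt_ub}; the paper handles it by subtracting the $d|A_{x_1,\dots,x_d}|$ term before averaging.
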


We consider the case $R = \mathbb{Z}_m$, but for our purposes, the structural differences between $[m]$ and $\Z_m$ will not be particularly important. Estimating $g(m,t)$ is a natural variant of the Gunderson-R\"odl problem, where we avoid only the middle slice of a Hilbert cube of dimension $2t$. We make a small but important change in that, in Definition \ref{def:g}, we require that $X$ be composed of $2t$ distinct elements, while a Hilbert cube may even have $x_1=x_2\dots=x_d$. In this case, the Hilbert cube is a $(d+1)$-element arithmetic progression, but $C(X)$ is a singleton. 

We obtain the following bounds on $g(m,t)$.
\begin{theorem}
\label{thm:gmt_lb}
For all $t \geq 3$ and $m \geq 4$ we have
$$g(m,t) \geq m^{1-\frac{2t}{\binom{2t}{t}-1}-O(1/\sqrt{\log m})},$$ and if furthermore $m$ is prime, then
$$g(m,t) \geq m^{1-\frac{2t-1}{\binom{2t}{t}-1}-O(1/\sqrt{\log m})}.$$ For $t=2$ and $m \geq 64$ we have $g(m,2) \geq \sqrt{m}/8$.
\end{theorem}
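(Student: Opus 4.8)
The plan is to show $g(m,t) \ge \sqrt m/8$ for all $t \ge 2$ by an explicit Sidon construction, and then, for $t \ge 3$, to do better by a probabilistic deletion argument over a Behrend-type host, in the spirit of the lower bound in Theorem~\ref{thm:gunder}.

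The starting point is that a Sidon subset $R$ of $\Z_m$ contains no translate of $C(X)$ for any $X \in \binom{\Z_m}{2t}$: if $X = \{a_1,\dots,a_{2t}\}$ and $x_0 + C(X) \subseteq R$, pick $j \in \{1,\dots,t\}$ and $k \in \{t+1,\dots,2t\}$ with $a_j - a_k \ne \sum_{i=1}^{t}a_i - \sum_{i=t+1}^{2t}a_i$ (possible since the $a_i$ are distinct and $t\ge 2$); then the four elements $x_0+\sum_{i=1}^{t}a_i$, $x_0+\sum_{i=t+1}^{2t}a_i$, $x_0+\sum_{i=1}^{t}a_i-a_j+a_k$, $x_0+\sum_{i=t+1}^{2t}a_i-a_k+a_j$ lie in $R$, the first plus the second equals the third plus the fourth, but the two pairs are distinct, violating the Sidon property. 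It therefore suffices to produce a Sidon set in $\Z_m$ of size $\ge \sqrt m/8$: taking a prime $p \le \sqrt m/2$ with $p \ge \sqrt m/8$ (available by Bertrand's postulate once $m \ge 64$), the Erd\H{o}s--Tur\'an set $\{2pk + (k^2\bmod p) : 0 \le k < p\}$ lies in an interval of length $< m$, hence is Sidon in $\Z_m$, and has $p \ge \sqrt m/8$ elements. This settles $t = 2$.

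For $t \ge 3$, let $B \subseteq \Z_m$ be a set with no nonconstant three-term arithmetic progression (a Behrend-type construction gives such $B$ with $|B| \ge m^{1-O(1/\sqrt{\log m})}$), retain each element of $B$ in a random set $R_0$ independently with probability $q$, and obtain $R$ by deleting one point from every translate of some $C(X)$ ($X \in \binom{\Z_m}{2t}$) contained in $R_0$; then $R$ is admissible in the sense of Definition~\ref{def:g} and $\E|R| \ge q|B| - \E N_{\mathrm{bad}}$, where $N_{\mathrm{bad}}$ counts such translates inside $R_0$. The key point is that every bad set contained in $B$ is \emph{nondegenerate}, i.e.\ has exactly $\binom{2t}{t}$ elements: if $x_0 + C(X) \subseteq B$ with $|C(X)| < \binom{2t}{t}$, choose a nontrivial equality $\sum_{i\in T}a_i = \sum_{i\in T'}a_i$ of $t$-subset sums minimising $\ell := |T\setminus T'| \ge 1$; minimality excludes $\ell = 1$ (which would force $a_i = a_j$), so $\ell \ge 2$, the $2\ell$-set $P\cup Q$ (with $P = T\setminus T'$, $Q = T'\setminus T$) has a bisection $\{P',Q'\}$ into two $\ell$-sets other than $\{P,Q\}$, and for any $(t-\ell)$-subset $U$ of the remaining indices the always-valid identity $y_{P\cup U}+y_{Q\cup U} = y_{P'\cup U}+y_{Q'\cup U}$ (with $y_S := x_0+\sum_{i\in S}a_i$) combines with $y_{P\cup U} = y_{Q\cup U}$ to give a three-term progression $2y_{P\cup U} = y_{P'\cup U}+y_{Q'\cup U}$ inside $x_0 + C(X) \subseteq B$, which minimality shows is nonconstant --- impossible. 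Hence $\E N_{\mathrm{bad}} \le m^{2t+1}q^{\binom{2t}{t}}$ (there being at most $m^{2t+1}$ pairs $(x_0,X)$), and optimising $q$ against $q|B|$ gives, via $|B| \ge m^{1-O(1/\sqrt{\log m})}$, the bound $|R| \ge m^{1 - 2t/(\binom{2t}{t}-1) - O(1/\sqrt{\log m})}$.

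For prime $m$ one saves a factor of $m$ by destroying bad \emph{sets} rather than pairs: since $\gcd(t,m)=1$, the translates of $C(X)$ are exactly the sets $C(X')$ with $X'$ a translate of $X$, so the assignment $(x_0,X)\mapsto x_0+C(X)$ has fibres of size $\asymp m$ and there are only $O(m^{2t})$ distinct bad sets; running the same argument with $m^{2t}$ in place of $m^{2t+1}$ then gives $g(m,t) \ge m^{1 - (2t-1)/(\binom{2t}{t}-1) - O(1/\sqrt{\log m})}$. I expect the nondegeneracy dichotomy of the third paragraph --- together with the care needed to run it, and the ensuing optimisation, uniformly --- to be the only genuinely delicate step; everything else is a routine first-moment estimate, and the loss $m^{-O(1/\sqrt{\log m})}$ enters only because a three-term-progression-free $B$ cannot have positive density.
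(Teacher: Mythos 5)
Your proposal is correct and follows essentially the same two-pronged approach as the paper: a Sidon-set construction for $t=2$ (you use Erd\H{o}s--Tur\'an, the paper uses Singer, but both give $\Theta(\sqrt m)$), and for $t\ge 3$ a probabilistic deletion over a Behrend host, with the nondegeneracy-via-3-AP-freeness reduction and the translation-to-$x_0=0$ trick exploiting $\gcd(t,m)=1$ in the prime case. The only cosmetic difference is in the nondegeneracy step: the paper produces the 3-AP directly from any single coincidence $\sum_{S_1}a_i=\sum_{S_2}a_i$ by swapping one element of $S_1\setminus S_2$ with one of $S_2\setminus S_1$, whereas you invoke a minimal coincidence and re-bisect $P\cup Q$; both are valid, and the paper's version is a little shorter since no minimality is needed.
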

\begin{theorem}
\label{thm:gmt_ub}
For each $t \geq 2$ and all $m$ sufficiently large depending on $t$, we have 
$$g(m,t) \leq 4^{1-1/2^{2t}}(\sqrt{m}+\sqrt{2t})^{2-1/2^{2t-1}}.$$
\end{theorem}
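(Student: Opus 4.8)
The plan is to show that any $R \subseteq \Z_m$ larger than the bound in Theorem \ref{thm:gmt_ub} must contain a translate of $C(X)$ for some $2t$-element set $X$ of distinct elements of $\Z_m$ --- which is exactly what Definition \ref{def:g} forbids, so $g(m,t)$ cannot exceed that bound. The starting point is an embedding of $C(X)$ into a Hilbert cube: if $z_1,\dots,z_{2t}$ are distinct nonzero elements of $\Z_m$ and we take $X = \{z_1,\dots,z_{2t}\}$, then
\begin{equation*}
C(X) = \left\{\sum_{i \in T} z_i \ : \ T \in \binom{[2t]}{t}\right\} \subseteq \left\{\sum_{i \in I} z_i \ : \ I \subseteq [2t]\right\},
\end{equation*}
so $C(X)$ lies inside the $(2t)$-dimensional Hilbert cube generated by $z_1,\dots,z_{2t}$ --- it is precisely the ``middle slice'' of the set of subset sums. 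Hence if $R$ contains a translate $a+H$ of such a Hilbert cube $H$, then $a + C(X) \subseteq R$. So the task reduces to showing that a set $R$ of the stated size contains a translate of a $(2t)$-dimensional Hilbert cube \emph{with distinct nonzero generators}.

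To produce such a cube I would run the iterated ``popular-difference'' argument that underlies Theorem \ref{thm:gunder}, with one modification to keep the generators non-degenerate. Put $R_0 = R$; given $R_k$ and distinct nonzero $z_1,\dots,z_k$ with $R_k + \{0,z_1\} + \dots + \{0,z_k\} \subseteq R$, choose $z_{k+1} \in \Z_m \setminus \{0,z_1,\dots,z_k\}$ maximising $|R_k \cap (R_k - z_{k+1})|$ and set $R_{k+1} = R_k \cap (R_k - z_{k+1})$; then $R_{k+1} + \{0,z_1\} + \dots + \{0,z_{k+1}\} \subseteq R$ and $z_1,\dots,z_{k+1}$ remain distinct and nonzero. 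Popular differences are abundant --- $\sum_{z \in \Z_m} |R_k \cap (R_k - z)| = |R_k|^2$ and only $O(t)$ values of $z$ are excluded --- so averaging gives $|R_{k+1}| \geq |R_k|^2/(2m)$ whenever $|R_k|$ is at least a constant multiple of $t$. Iterating $2t$ times, the resulting recursion forces $|R_{2t}| \geq 1$ as soon as $|R| \gtrsim m^{1 - 1/2^{2t}}$, and any $a \in R_{2t}$ then gives the required Hilbert cube inside $R$. Carrying the lower-order terms through the recursion (the $|R_k|$ subtracted in the averaging step, and the final ``$\geq 1$'') produces the explicit constant $4^{1 - 1/2^{2t}}$ and the $\sqrt m + \sqrt{2t}$ in the statement.

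The step needing real care --- and the reason one cannot simply quote Theorem \ref{thm:gunder} as a black box --- is the non-degeneracy of the generators. Gunderson and R\"odl's theorem yields a translate of a Hilbert cube but permits its generators to coincide or vanish, in which case the ``cube'' collapses to an arithmetic progression $\{0,z,\dots,(2t)z\}$ of at most $2t+1$ terms; for $t \geq 3$ such a progression is far too small to contain a translate of $C(X)$ for any valid $X$. This is exactly why the iteration must be re-run with the requirement that each new popular difference be \emph{fresh}; the only thing to verify is that discarding the $O(t)$ forbidden differences at each step costs merely a constant factor, which it does since $R$ is large. Everything else is routine bookkeeping in the recursion. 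I expect this non-degeneracy issue to be the main obstacle; I note in passing that $2t - 1$ iterations would already suffice --- using $X = \{0,z_1,\dots,z_{2t-1}\}$ in the embedding above --- giving a marginally better exponent, but the bound as stated is all that is needed for Theorem \ref{thm:main_result}.
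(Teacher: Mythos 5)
Your proposal is correct and follows essentially the same route as the paper: an iterated popular\nobreakdash-difference (nested intersection) argument, excluding previously chosen differences at each step so that the $2t$ generators of the Hilbert cube remain distinct, with the constants tracked through the recursion (the paper does this via Lemma~\ref{lem:binom_algebra} and the inductive Claim~\ref{claim:final}). Two small remarks: the paper only insists the generators be distinct, not nonzero, since $0$ is a legitimate element of a $2t$-set $X \subset \Z_m$; and your closing observation that $2t-1$ iterations with $X = \{0,z_1,\dots,z_{2t-1}\}$ would yield a marginally smaller exponent is correct but is not exploited in the paper.
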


(Here, we use the standard asymptotic notation: if $X$ is a set and $f,h:X \to \mathbb{R}^{+}$, we write $f =O(h)$ if there exists an absolute constant $C >0$ such that $f(x) \leq C h(x)$ for all $x \in X$.)

It would be of interest to narrow the gap between our upper and lower bounds on $g(m,t)$.

The proofs of Theorem \ref{thm:gmt_ub} and of the first part of Theorem \ref{thm:gmt_lb} (i.e.\ the $t > 2$ case) are very similar to those used in \cite{GUNDERSON1998} to prove Theorem \ref{thm:gunder} above. The proof of our lower bound for $t > 2$ consists of a probabilistic construction very similar to that of Gunderson and R\"odl in \cite{GUNDERSON1998}; we have to choose a random set of slightly lower density as we must avoid the middle slice of a Hilbert cube, as opposed to an entire Hilbert cube. At first sight, it might seem that an upper bound on $g(m,t)$ follows from the upper bound given in Theorem \ref{thm:gunder}, since a set which contains a Hilbert cube contains its middle layer, but one must make small changes to the proof in \cite{GUNDERSON1998} so as to ensure that the generators of the Hilbert cube we find (viz., the $x_i$ in the above definition), are distinct. In fact, we find an entire Hilbert cube generated by these $x_i$, not just the middle layer of such, so there may be some room for improvement.

We give an explicit construction for the $t=2$ case in Theorem $\ref{thm:gmt_lb}$: this construction outperforms the probabilistic one in the case $t=2$. The probabilistic approach does not sample directly from the ground set but instead from a large 3-AP-free set. This choice is critical in (optimizing) the argument, since it destroys additive structure in the (still large) set we obtain. 

Having introduced the background and general structure of our approach, we briefly consider the asymmetric version of the daisy problem. For $t \geq 1$ and $s \geq t+1$, an {\em $r$-uniform $(s,t)$-daisy} $\mathcal{D}_r^{s,t}$ is defined to be a set of the form
$$\{S \cup T \ :  T\subset U, \ |T|=t  \}$$
for some sets $S,U$ with $|S|=r-t$, $|U|=s$ and $S \cap U = \emptyset$. (So $\mathcal{D}_{r}^{2t,t} := \mathcal{D}_r^{t}$.) In this language, we have so far only considered $(2t,t)$-daisies. Conjecture \ref{conj:daisy} immediately implies the analog for asymmetric daisies ($s \neq 2t$), since an $(s,t)$-daisy is contained in a $(2\max\{t,s-t\},\max\{t,s-t\})$-daisy. In \cite{BOLLOBS2011} Bollob\'as, Leader, and Malvenuto focus on the symmetric case ($s=2t$), and in this article we do the same. However, the proofs in the sequel may be modified in the obvious way to obtain the following asymmetric analogue of Theorem \ref{thm:main_result}.

\begin{theorem}
For each $t \geq 2$ and $s \geq 4$ with $(s,t) \neq (4,2)$, we have 
\begin{equation*}
    \pi(D_r^{s,t}) \geq r^{-\frac{2s-2}{\binom{s}{t}-1}-O(1/\sqrt{\log r})}.
\end{equation*}
\end{theorem}

The remainder of this paper is structured as follows. In Section \ref{sec:transfer}, we prove Theorem \ref{thm:main_result} using Theorem \ref{thm:gmt_lb}. In Section \ref{sec:gmt}, we prove Theorems \ref{thm:gmt_lb} and \ref{thm:gmt_ub}. Note that we do not require the latter in our study of the Tur\'an density of daisies, but it may be of independent interest.

\section{The Proof of Theorem \ref{thm:main_result}}\label{sec:transfer}
\begin{proof}
In proving Theorem \ref{thm:main_result}, by an appropriate choice of $c$ (and of the absolute constant implicit in the big-O notation), we may clearly assume that $r \geq 8$. For $r,t \in \mathbb{N}$ with $r \geq 8$ and $n$ sufficiently large depending on $r$ and $t$, we proceed to construct a $\mathcal{D}_r^t$-free family of $r$-element subsets of $[n]$. We may assume that $n \geq 2r^2$. Let $L$ be a prime number such that $r^2 < L < 2r^2$ (such exists, by Bertrand's postulate). We use a `partite' construction, partitioning $[n]$ into $L$ blocks, and then taking only $r$-sets containing at most one element from each block. Formally, the block of vertex $i$ will be denoted by a variable $x_i$, where for $i \in [n]$ we set $x_i = \floor{L\frac{i-1}{n}}$; note that $0 \leq x_i < L$ for each $i \in [n]$. By Definition \ref{def:g}, there exists a set $R\subset \Z_L$ of size $|R| = g(L,t)$ with the property that for any $X \subset \Z_L$ with $|X|=2t$ and for any $x_0 \in \Z_L$, we have $x_0+C(X) \not \subset R$. Define a family $\F_{R}\subset \binom{[n]}{r}$ by
\begin{equation*}
\F_{R}=\left\{S \in \binom{[n]}{r} \ \mid \ \sum_{i \in S} x_i \in R \quad \text{and} \quad  (\forall  i,j \in S)  (x_i=x_j \Rightarrow i=j)  \right\}.
\end{equation*}
First, we check that $\F_{R}$ is $\D_r^t$-free. Indeed, suppose for the sake of a contradiction that $\F_{R}$ contains a daisy $\mathcal{D}=\{S_0 \cup T \ :  \ T\subset U, \ |T|=t  \}$, where $S_0,\ U\subset [n]$ with $|S_0|=r-t$, $|U|=2t$ and $S_0 \cap U = \emptyset$. Let $x_0:= \sum\limits_{i \in S_0}x_i$. By the above property of $R$, the $(2t)$-element set
\begin{equation*}
X=\{x_i \ : \ i \in U \} \subset \mathbb{Z}_L
\end{equation*}
must satisfy $x_0+C(X) \not \subset R$, and therefore there is a $t$-sum, indexed by $T=\{i_1,i_2,\dots,i_t\}\subset U$, say, such that
\begin{equation*}
x_0+x_{i_1}+x_{i_2}+\dots +x_{i_t}=x_0+\sum_{i \in T} x_i \not \in R.
\end{equation*}
It follows that $S:=S_0 \cup T \notin \F_{R}$ and therefore $\mathcal{D} \not \subset \mathcal{F}_R$, a contradiction, as required.

Now to finish the proof of Theorem \ref{thm:main_result} we bound $|\F_R|$ from below. First note that there are at least $\frac{1}{2}\binom{n}{r}$ sets $S \in \binom{[n]}{r}$ with $x_i\neq x_j \text{ for all } i \neq j, \ i,j \in S$. Indeed, choose a set $S$ uniformly at random from ${[n] \choose r}$. Since the probability that a uniformly random two-element subset $\{i,j\}$ of $[n]$ has $x_i=x_j$ is at most $1/L$, we have
\begin{align*}
\P(x_i=x_j \text{ for some } i \neq j, \ i,j \in S)
&\leq (1/L){n \choose 2}{n-2 \choose r-2}/{n \choose r}\\
& = r(r-1)/(2L)\\
& \leq 1/2.
\end{align*}
The family $\F = \F_R$, defined above, is $\D_r^t$-free even if the set $R$ is replaced by a translate $R_a:=R+a$ for some $a \in \Z_L$. Averaging over all such translates yields some translate $R_a$ of $R$ such that $|\F_{R_a}| \geq \frac{1}{2}\binom{n}{r}\frac{|R|}{L}$, and therefore
\begin{align*}
\pi(D_r^t) \geq \frac{g(L,t)}{2L}.
\end{align*}
Now we may apply Theorem \ref{thm:gmt_lb}, recalling that $r^2 < L < 2r^2$ is prime. When $t=2$ and $L=m \geq 64$ (which follows from $r \geq 8$), we have
\begin{equation*}
\pi(D_r^2) \geq \frac{g(L,2)}{2L} \geq \frac{\sqrt{L}}{16L}  \geq \frac{\sqrt{r^2}}{32r^2} = \frac{1}{32 r}
\end{equation*}
and when $t>2$ we have
\begin{equation*}
\pi(D_r^t) \geq \frac{\ L^{1-\frac{2t-1}{\binom{2t}{t}-1}-O(1/\sqrt{\log L})}}{L}\geq \ (r^2)^{-\frac{2t-1}{\binom{2t}{t}-1}-O(1/\sqrt{\log r})} = r^{-\frac{4t-2}{\binom{2t}{t}-1} -O(1/\sqrt{\log r})},
\end{equation*}
as required.
\end{proof}

\section{Bounds on $g(m,t)$}\label{sec:gmt}

The focus of this section is the analysis of $g(m,t)$.
\subsection{Proof of Theorem \ref{thm:gmt_lb}}
\begin{proof}
First assume $t \geq 3$. In this case, we use the idea of Gunderson and R\"odl (in \cite{GUNDERSON1998}) of passing to a fairly dense subset of $\mathbb{Z}_m$ which is free of 3-term arithmetic progressions; this `destroys' a lot of the additive structure we want to avoid.

By the well-known construction of Behrend in \cite{Behrend1946}, there exists a set $R_0 \subset [\floor{m/2}]$ with $|R_0| = m^{1-\gamma(m)}$ for $\gamma(m) := \frac{4}{\sqrt{\log{(m/2)}}}$ (here, and elsewhere, $\log$ denotes the natural logarithm), such that $R_0$ contains no $3$-term arithmetic progression. Let $R_1 \subset \Z_m$ be the natural embedding of $R_0$ into $\Z_m$. Then $R_1$ also contains no $3$-term arithmetic progression.
Set 
\begin{equation*}
p=\begin{cases}
\frac{1}{8}m^{-\frac{2t-1+\gamma(m)}{\binom{2t}{t}-1}} \quad &\text{if }m \text{ is prime,}\\
\frac{1}{8}m^{-\frac{2t+\gamma(m)}{\binom{2t}{t}-1}}\quad  &\text{otherwise,}
\end{cases}
\end{equation*}
and choose a set $R_2 \subset R_1$ by including each element of $R_1$ independently at random with probability $p$. A standard Chernoff bound (for example Theorem 4.5 in \cite{mm}) yields
\begin{equation}\label{eqn:r_bnd}
\P(|R_2| \leq |R_1| p/2) \leq e^{-|R_1|p/8}.
\end{equation}
Define the random variable
$$Y = |\{x_0+C(X):\ x_0 \in \mathbb{Z}_m,\ X \subset \mathbb{Z}_m,\ |X|=2t,\ x_0+C(X) \subset R_2\}|.$$
Since $R_2$ does not contain any $3$-term arithmetic progressions, for any set of the form $x_0+C(X)$ lying within $R_2$, we must have $|x_0+C(X)| = |C(X)|= \binom{2t}{t}$. Indeed, suppose for a contradiction that $X = \{x_1,\dots,x_{2t}\}$ is a $(2t)$-element subset of $\mathbb{Z}_m$ with $x_0+C(X) \subset R_2$, where $x_0 \in \mathbb{Z}_m$ and $|C(X)| < {2t \choose t}$. Then there exist two distinct $t$-element subsets of $X$, $\{x_{i_1},\ldots,x_{i_t}\} = S_1$ and $\{x_{i'_1},\ldots,x_{i'_t}\} =S_2$ say, such that
\begin{equation*}
x_{i_1}+\dots+x_{i_t} = x_{i'_1}+\dots+x_{i'_t};
\end{equation*}
we may assume without loss of generality that $x_{i_1} \in S_1 \setminus S_2$ and $x_{i'_1} \in S_2 \setminus S_1$, so that $x_{i_1} \neq x_{i'_j}$ for all $j$ and $x_{i'_1} \neq x_{i_j}$ for all $j$. Then
\begin{equation*}
\{x_{i'_1}+x_{i_2}+\dots+x_{i_t}, x_{i_1}+x_{i_2}+\dots+x_{i_t},x_{i_1}+x_{i'_2}+\dots+x_{i'_t} \} \subset R_2
\end{equation*}
is a (nontrivial) $3$-term arithmetic progression in $R_2$, a contradiction.

We now proceed to bound $\E Y$ from above. In the case that $m$ is not prime we may crudely bound the number of possible sets of the form $x_0+C(X)$ from above by $m^{2t+1}$, which is the number of choices for $x_0,x_1,\dots,x_{2t} \in \Z_m$. If $m$ is prime then we may assume each such set has $x_0=0$, by translating each of $x_1,\dots,x_{2t}$ by $-t^{-1}x_0$, leaving only $m^{2t}$ choices. The probability that each fixed set of the form $x_0+C(X)$ lies in $R_2$ is of course $p^{{2t \choose t}}$. It follows that
\begin{equation*}
\E Y \leq \left.\begin{cases}m^{2t}p^{\binom{2t}{t}} \quad & \text{if }m \text{ is prime}\\
m^{2t+1}p^{\binom{2t}{t}} \quad &\text{otherwise}
\end{cases} \right\}\leq m^{1-\gamma(m)} \tfrac{p}{8}.
\end{equation*}
It follows from Markov's inequality that
\begin{equation}\label{eqn:y_bnd}
\P(Y \geq m^{1-\gamma(m)} p/4) \leq 1/2.
\end{equation}
Combining \eqref{eqn:r_bnd} and \eqref{eqn:y_bnd}, we obtain
\begin{equation}\label{eqn:unn_bnd}
\P\left(|R_2|> m^{1-\gamma(m)} p/2 \text{ and } Y < m^{1-\gamma(m)} p/4 \right) \geq 1-e^{-m^{1-\gamma(m)}p/8}-\tfrac{1}{2}.
\end{equation}
Clearly, for any $t \geq 2$ and $m$ sufficiently large depending on $t$, we have $1-\gamma(m)-\frac{\gamma(m)+2t}{\binom{2t}{t}-1}>0$, so for large enough $m$, the probability in \eqref{eqn:unn_bnd} is positive, and therefore there exists a set $R_2 \subset \Z_m$ with $|R_2|> m^{1-\gamma(m)}p/2$ and $Y <  m^{1-\gamma(m)}p/4$. Now for each set of the form $x_0+C(X) \subset R_2$ for $(x_0,X)=(x_0,\{x_1,\dots,x_{2t}\})$ we remove a single element from $R_2$, chosen arbitrarily from $x_0+C(X)$. The total number of elements deleted from $R_2$ is at most $Y < m^{1-\gamma(m)}p/4$ and we are still left with 
\begin{equation*}
|R_2|-Y \geq m^{1-\gamma(m)} \tfrac{p}{4}=\begin{cases}\tfrac{1}{32} m^{1-\gamma(m)-\frac{2t-1+\gamma(m)}{\binom{2t}{t}-1}} \quad & \text{if }m \text{ is prime}\\
\tfrac{1}{32} m^{1-\gamma(m)-\frac{2t+\gamma(m)}{\binom{2t}{t}-1}} \quad &\text{otherwise}
\end{cases} 
\end{equation*}
elements, finishing the proof of the first statement of Theorem \ref{thm:gmt_lb}.

Finally, in the case $t=2$, we give an algebraic construction that improves upon the random one. First, we recall the definition of a Sidon set.
\begin{definition}
A {\em Sidon set} in an Abelian group $G$ is a subset $S \subset G$ such that the only solutions to the equation $a+b=c+d$ with $a,b,c,d \in S$, are the trivial ones (meaning, those with $\{a,b\} = \{c,d\}$).
\end{definition}
It follows from the classical construction of Singer \cite{Singer1938} that for any prime $p$ there is a Sidon set of size $p+1$ inside $\Z_{p^2+p+1}$. Assume that $m \geq 64$ and let $p$ be a prime with $\sqrt{m}/8 \leq p \leq \sqrt{m}/4$ (such is furnished by Bertrand's postulate). Let $R_0$ be a Sidon set of size at least $\sqrt{m}/8$ inside $\Z_{p^2+p+1}$. The image $R$ of $R_0$ under the natural inclusion map  from $\Z_{p^2+p+1}$ to $\Z_{m}$ is a Sidon set in $\mathbb{Z}_m$ (here, we use $p^2+p+1\leq m/16+\sqrt{m}/4+1 < m/2$).
Now we will show that for any $x_0$ and $X=\{x_1,x_2,x_3,x_4\} \in {\Z_m \choose 4}$ we have $x_0+C(X) = x_0+\{x_1+x_2,x_1+x_3,x_1+x_4,x_2+x_3,x_2+x_4,x_3+x_4\} \not \subset R$. Suppose for a contradiction that $x_0+C(X) \subset R$; then
\begin{equation*}
(x_0+x_1+x_2)+(x_0+x_3+x_4)=(x_0+x_1+x_3)+(x_0+x_2+x_4)
\end{equation*}
and each term in brackets is an element of $R$. Since $R$ is a Sidon set, this implies $x_2=x_3$ or $x_1=x_4$, contradicting the fact that the $x_i$ are distinct. We have $|R| \geq \sqrt{m}/8$ and therefore we are done in the case $t=2$.
\end{proof}
\begin{rem}
We were not able to generalize the Sidon set approach to $t>2$, hence our reliance on the probabilistic construction avoiding middle layers of Hilbert cubes. There are some improvements upon Behrend's construction, for example by Elkin in \cite{Elkin} and Green and Wolf in \cite{GreenWolf}. Using these yields, for $t\geq3$, a slightly better error-term in the exponent of $r$, but this does not affect the main term in the exponent.
\end{rem}
\subsection{Proof of Theorem \ref{thm:gmt_ub}}
\begin{proof}
We begin with a quick calculation.
\begin{lemma}\label{lem:binom_algebra}
If $m,d$, and $b$ are positive real numbers with $m \geq d+1$ and $b \geq \max\{\frac{\sqrt{m}+\sqrt{d}}{2\sqrt{d}},4d+1\}$, then $\frac{\binom{b}{2}-db}{m-d} \geq \frac{b^2}{4(\sqrt{m}+\sqrt{d})^2}$.
\end{lemma}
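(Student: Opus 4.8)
The plan is to clear denominators and reduce the claim to an elementary inequality in $\sqrt{m}$, $\sqrt{d}$ and $b$. Since $m \geq d+1$ we have $m-d \geq 1 > 0$, so the inequality to be proved is equivalent to $4(\sqrt{m}+\sqrt{d})^2\big(\binom{b}{2}-db\big) \geq b^2(m-d)$. Using the factorisation $m - d = (\sqrt{m}+\sqrt{d})(\sqrt{m}-\sqrt{d})$ and cancelling one factor of $\sqrt{m}+\sqrt{d} > 0$, then substituting $\binom{b}{2} - db = \tfrac{1}{2} b(b-2d-1)$ and cancelling a factor of $b > 0$, and finally collecting the $\sqrt{m}$ and $\sqrt{d}$ terms, the claim becomes equivalent to
\begin{equation*}
\sqrt{m}\,(b - 4d - 2) + \sqrt{d}\,(3b - 4d - 2) \geq 0 .
\end{equation*}
So it suffices to prove this under the hypotheses $b \geq 4d+1$ and $b \geq \tfrac{\sqrt{m}+\sqrt{d}}{2\sqrt{d}}$ (and $d \geq 1$, which is implicit in the statement).

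First, $b \geq 4d+1$ gives $3b - 4d - 2 \geq 8d+1 > 0$, so the $\sqrt{d}$-term is nonnegative; hence if moreover $b \geq 4d+2$ the $\sqrt{m}$-term is nonnegative too and there is nothing more to do. The remaining range is $4d+1 \leq b < 4d+2$, where $b - 4d - 2 \in [-1,0)$, and this is precisely where the second hypothesis is needed. From $b \geq \tfrac{\sqrt{m}+\sqrt{d}}{2\sqrt{d}}$ we get $\sqrt{m} \leq \sqrt{d}(2b-1)$, and since $b-4d-2 < 0$, multiplying this inequality through by $b-4d-2$ reverses it, giving $\sqrt{m}(b-4d-2) \geq \sqrt{d}(2b-1)(b-4d-2)$. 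Plugging this in,
\begin{equation*}
\sqrt{m}(b-4d-2) + \sqrt{d}(3b-4d-2) \;\geq\; \sqrt{d}\big[(2b-1)(b-4d-2) + 3b-4d-2\big],
\end{equation*}
and a one-line expansion shows the bracket equals $2b(b-4d-1)$, which is $\geq 0$ by $b \geq 4d+1$.

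The calculations are entirely routine. The one thing to be careful about is that one must \emph{not} discard the factor $\tfrac{\sqrt{m}-\sqrt{d}}{\sqrt{m}+\sqrt{d}}$ at the start (using only $\sqrt{m}-\sqrt{d} \leq \sqrt{m}+\sqrt{d}$): that cruder route reduces the claim to $4\big(\binom{b}{2} - db\big) \geq b^2$, i.e.\ to $b \geq 4d+2$, which narrowly fails to follow from $b \geq 4d+1$. Keeping that factor and using $b \geq \tfrac{\sqrt{m}+\sqrt{d}}{2\sqrt{d}}$ on the short interval $4d+1 \leq b < 4d+2$ is what closes the gap, and is the only mildly delicate point in the argument.
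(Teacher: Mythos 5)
Your proof is correct. However, your route is more computational and case-based than the paper's. The authors combine the two hypotheses multiplicatively and never need a case split: from $b \geq 4d+1$ they observe $\binom{b}{2}-db \geq \tfrac{1}{4}b(b-1)$, and from $b \geq \tfrac{\sqrt{m}+\sqrt{d}}{2\sqrt{d}}$ they get $\tfrac{b-1}{b} \geq \tfrac{\sqrt{m}-\sqrt{d}}{\sqrt{m}+\sqrt{d}}$; multiplying gives $\binom{b}{2}-db \geq \tfrac{b^2}{4}\cdot\tfrac{\sqrt{m}-\sqrt{d}}{\sqrt{m}+\sqrt{d}}$, and dividing by $m-d=(\sqrt{m}-\sqrt{d})(\sqrt{m}+\sqrt{d})$ finishes immediately. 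Your version clears denominators, reduces to the linear form $\sqrt{m}\,(b-4d-2)+\sqrt{d}\,(3b-4d-2)\geq 0$, and then splits on whether $b \geq 4d+2$; this is correct and your algebra checks out (the bracket indeed collapses to $2b(b-4d-1)$), but the paper's factoring of $b-1$ as $b\cdot\tfrac{b-1}{b}$ is what lets the second hypothesis absorb the shortfall cleanly without cases. Your closing remark correctly identifies that discarding the $\tfrac{\sqrt{m}-\sqrt{d}}{\sqrt{m}+\sqrt{d}}$ factor would demand $b \geq 4d+2$, which is exactly the gap the paper's $\tfrac{b-1}{b}$ factor (and your case analysis) closes; that is the right insight about why both hypotheses on $b$ are needed.
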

\begin{proof}
Since
\begin{equation*}
    \frac{1}{b} \leq \frac{2\sqrt{d}}{\sqrt{m}+\sqrt{d}},
\end{equation*}
we have
\begin{equation*}
    \frac{b-1}{b} \geq \frac{\sqrt{m}-\sqrt{d}}{\sqrt{m}+\sqrt{d}}.
\end{equation*}
Since $b \geq 4d+1$ we have $\binom{b}{2}-db \geq \frac{b(b-1)}{4}$, and therefore
\begin{equation*}
    \binom{b}{2}-db \geq \frac{b^2}{4}\frac{\sqrt{m}-\sqrt{d}}{\sqrt{m}+\sqrt{d}}.
\end{equation*}
Dividing by $m-d$ yields the result.
\end{proof}

We may now obtain our upper bound on $g(m,t)$. Let $A \subset \Z_m$ such that 
\begin{equation}
\label{eq:Alb}
    |A| \geq 4^{1-1/2^{2t}}(\sqrt{m}+\sqrt{2t})^{2-1/2^{2t-1}}.
\end{equation}
We will show that $x_0+C(X) \subset A$ for some $x_0 \in \mathbb{Z}_m$ and $X \in {\mathbb{Z}_m \choose 2t}$. For $x_1,\dots,x_{d} \in \Z_m$ and $A \subset \Z_m$ we define $A_{x_1} := A \cap (A-x_1)$, $A_{x_1,x_2} := A_{x_1} \cap (A_{x_1}-x_2)$, and more generally,
\begin{equation*}
    A_{x_1,\dots,x_{d-1},x_{d}} := A_{x_1,\dots,x_{d-1}} \cap (A_{x_1,\dots,x_{d-1}}-x_{d}).
\end{equation*}
Then $A_{x_1,\dots,x_{d}} = \{x \in \mathbb{Z}_m:\ x+\sum_{i \in I} x_i \in A \ \forall \ I \subset [d] \}$ and so $A$ will contain a  translate of $C(\{x_1,\dots,x_{2t}\})$ if $|A_{x_1,\dots,x_{2t}}| \geq 1$. We will find these $x_i$ inductively, using the following claim.
\begin{cla}
\label{claim:final}
Provided $m$ is sufficiently large depending on $d$, for each $0\leq d \leq 2t$ there exist $d$ distinct elements $x_1,\dots,x_{d} \in \Z_m$ such that
\begin{equation*}
    |A_{x_1,\dots,x_{d}}| \geq \frac{|A|^{2^{d}}}{4^{2^{d}-1}(\sqrt{m}+\sqrt{2t})^{2^{d+1}-2}}.
\end{equation*}
\end{cla}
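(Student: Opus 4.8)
The plan is to induct on $d$. The base case $d=0$ is immediate: $A_{\emptyset} = A$, and the claimed bound reads $|A| \geq |A|$ (the denominator $4^{2^0-1}(\sqrt m + \sqrt{2t})^{2^1-2} = 4^0 (\sqrt m + \sqrt{2t})^0 = 1$). So suppose we have already found distinct $x_1,\dots,x_d \in \Z_m$ satisfying the stated lower bound on $|A_{x_1,\dots,x_d}|$, and write $B := A_{x_1,\dots,x_d}$ for brevity. We want to find a new element $x_{d+1} \in \Z_m$, distinct from $x_1,\dots,x_d$, such that $|B_{x_{d+1}}| = |B \cap (B - x_{d+1})|$ is large. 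The standard trick (as in Gunderson--R\"odl) is a second-moment / Cauchy--Schwarz averaging over $x_{d+1}$: since $\sum_{x \in \Z_m} |B \cap (B-x)| = |B|^2$ (each ordered pair $(b, b')$ with $b, b' \in B$ contributes to the term $x = b - b'$), there must exist some $x$ with $|B \cap (B-x)| \geq |B|^2/m$. To get a bound of the right shape $|B|^2 / (m - d)$ rather than $|B|^2/m$, I would first discard the (at most $d$) ``bad'' values $x \in \{x_1,\dots,x_d\}$ before averaging; more carefully, one should also discard $x = 0$ if necessary and handle the diagonal pairs $b = b'$ separately, so that the sum over the $\ge |B|^2 - |B|$ off-diagonal ordered pairs is spread over at most $m - d$ admissible values of $x$, giving some admissible $x_{d+1}$ with $|B_{x_{d+1}}| \geq \binom{|B|}{2}/(m-d)$ (up to the precise accounting of diagonal terms and the requirement $x_{d+1} \neq 0$, which I'll need to check is compatible with distinctness).

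Now I apply Lemma \ref{lem:binom_algebra} with $b = |B|$ and $d$ replaced by... well, I need $|B| \geq \max\{(\sqrt m + \sqrt{2t})/(2\sqrt{2t}), \, 8t+1\}$ or similar — this is where the hypothesis ``$m$ sufficiently large depending on $d$'' (equivalently on $t$) earns its keep, together with the inductive lower bound on $|B|$ and the global lower bound \eqref{eq:Alb} on $|A|$. Granting that, the lemma upgrades $\binom{|B|}{2}/(m-d)$ to $|B|^2 / \big(4(\sqrt m + \sqrt{2t})^2\big)$. Plugging in the inductive hypothesis $|B| \geq |A|^{2^d} / \big(4^{2^d - 1}(\sqrt m + \sqrt{2t})^{2^{d+1}-2}\big)$ and squaring, a routine exponent computation gives
\[
|B_{x_{d+1}}| \;\geq\; \frac{|A|^{2^{d+1}}}{4^{2^{d+1}-1}(\sqrt m + \sqrt{2t})^{2^{d+2}-2}},
\]
which is exactly the claimed bound at level $d+1$, and $A_{x_1,\dots,x_{d+1}} = B_{x_{d+1}}$ by definition, completing the induction.

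The main obstacle is the bookkeeping around \emph{distinctness} and the hypotheses of Lemma \ref{lem:binom_algebra}. Three things must be reconciled simultaneously: (i) the averaging argument must avoid the forbidden translates $x_1,\dots,x_d$ (and $0$) while still having enough ``mass'' left, which forces the denominator $m - d$ rather than $m$ and requires care in counting diagonal versus off-diagonal pairs; (ii) to apply Lemma \ref{lem:binom_algebra} I need $|B| \geq \max\{\frac{\sqrt m + \sqrt{2t}}{2\sqrt{2t}}, 8t+1\}$, which I should verify holds for all $0 \le d \le 2t$ from the inductive bound and \eqref{eq:Alb} once $m$ is large enough in terms of $t$ — this is the quantitative heart, and it's worth tracking that $2^{2t}$ (not just $2^d$) appears in \eqref{eq:Alb} precisely so that at $d = 2t$ the final bound $|A_{x_1,\dots,x_{2t}}| \geq |A|^{2^{2t}}/\big(4^{2^{2t}-1}(\sqrt m + \sqrt{2t})^{2^{2t+1}-2}\big) \geq 1$ follows from \eqref{eq:Alb}; and (iii) I should double-check the constant ``$4d+1$'' in Lemma \ref{lem:binom_algebra} lines up — with $d \le 2t$ the relevant threshold is $8t+1$, comfortably absorbed by taking $m$ large. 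Everything else is a direct transcription of the Gunderson--R\"odl density-increment scheme, with the only genuinely new ingredient being that we keep the $x_i$ pairwise distinct so that the resulting translate of $C(X)$ comes from a genuine $2t$-element set $X$.
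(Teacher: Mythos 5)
Your proposal matches the paper's argument essentially step for step: induction on $d$ with trivial base case; at each step an averaging (Cauchy--Schwarz / Gunderson--R\"odl) bound over translates $y \notin \{x_1,\dots,x_d\}$ giving some $x_{d+1}$ with $|A_{x_1,\dots,x_{d+1}}| \gtrsim \binom{|A_{x_1,\dots,x_d}|}{2}/(m-d)$; then Lemma \ref{lem:binom_algebra} to clean up the denominator; and finally the exponent arithmetic, with the crucial twist of keeping the $x_i$ pairwise distinct. The points you flag as ``to be checked'' (the precise numerator after excluding bad translates, and verifying $b \geq \max\{\frac{\sqrt m + \sqrt d}{2\sqrt d},\, 4d+1\}$ from the inductive bound together with \eqref{eq:Alb}) are exactly the bookkeeping the paper carries out, and they go through.
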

(Note that when $d=0$, the left-hand side of the above is defined to be $|A|$.)
\begin{proof}[Proof of Claim]
The proof is by induction on $d$ (with base case $d=0$, for which the claim holds trivially). Suppose the claim holds for $d$ for elements $x_1,\dots,x_d$. Since every pair of elements in $A_{x_1,\dots,x_{d}}$ uniquely determine a difference $y$, we have
\begin{equation*}
\sum_{y \in \Z_m} |A_{x_1,\dots,x_{d},y}| \geq \binom{|A_{x_1,\dots,x_{d}}|}{2}.
\end{equation*}
Therefore, when forbidding $y$ to assume the values $\{x_1,\dots,x_d\}$, we crudely obtain
\begin{equation*}
\sum_{y \in \Z_m \setminus \{x_1,\ldots,x_{d}\}} |A_{x_1,\dots,x_{d},y}| \geq \binom{|A_{x_1,\dots,x_{d}}|}{2} - d|A_{x_1,\dots,x_{d}}|.
\end{equation*}
By averaging over $y \in \Z_m \setminus \{x_1,\ldots,x_{d}\}$, there exists $y' \in \Z_m \setminus \{x_1,\ldots,x_{d}\}$ such that
$$|A_{x_1,\dots,x_{d},y'}| \geq \frac{\binom{|A_{x_1,\dots,x_{d}}|}{2} - d|A_{x_1,\dots,x_{d}}|}{m-d}.$$
We now wish to apply Lemma $\ref{lem:binom_algebra}$ with $b=|A_{x_1,\dots,x_{d}}|$. The hypotheses that $m \geq d+1$ and $b \geq 4d+1$ are satisfied for $m$ large enough (depending on $t$), so to apply Lemma $\ref{lem:binom_algebra}$ it remains only to check that $b \geq \frac{\sqrt{m}+\sqrt{d}}{2\sqrt{d}}$, which follows from our inductive hypothesis and our lower bound (\ref{eq:Alb}) on $|A|$, as well as the fact that $d<2t$ in the inductive step:
\begin{align*}
    b & = |A_{x_1,\dots,x_{d}}|\\
    &\geq \frac{|A|^{2^{d}}}{4^{2^{d}-1}(\sqrt{m}+\sqrt{2t})^{2^{d+1}-2}}\\
    &\geq \frac{4^{2^{d}-2^{d-2t}}(\sqrt{m}+\sqrt{2t})^{2^{d+1}-2^{d-2t+1}}}{4^{2^{d}-1}(\sqrt{m}+\sqrt{2t})^{2^{d+1}-2}}\\
    &\geq 4^{1-2^{d-2t}}(\sqrt{m}+\sqrt{2t})^{2-2^{d-2t+1}}\\
    &\geq \frac{\sqrt{m}+\sqrt{d}}{2\sqrt{d}}.
\end{align*}
Hence, applying Lemma $\ref{lem:binom_algebra}$ we have
\begin{align*}
    |A_{x_1,\dots,x_{d},y'}| &\geq \frac{\binom{|A_{x_1,\dots,x_{d}}|}{2} - d|A_{x_1,\dots,x_{d}}|}{m-d}\\
    &\geq \frac{|A_{x_1,\dots,x_{d}}|^2}{4(\sqrt{m}+\sqrt{d})^2}\\
    &\geq \frac{|A|^{2^{d+1}}}{4^{2^{d+1}-2+1}(\sqrt{m}+\sqrt{d})^{2^{d+2}-4+2}},
\end{align*}
as required. We may therefore choose $x_{d+1}=y'$.
\end{proof}
Applying Claim \ref{claim:final} with $d=2t$, and using our lower bound (\ref{eq:Alb}) on $|A|$, we obtain distinct $x_1,\ldots,x_{2t} \in \mathbb{Z}_m$ such that $|A_{x_1,\dots,x_{2t}}| \geq 1$, completing the proof of Theorem \ref{thm:gmt_ub}.
\end{proof}

\subsubsection*{Acknowledgement}
We are very grateful to Victor Souza for bringing the reference \cite{GUNDERSON1998} to our attention.

\end{document}